\newfont{\smoldita}{cmmib8}
\newfont{\boldita}{cmmib10}
\newfont{\bboldita}{cmmib10 scaled\magstep1}
\newcommand{\sem}[1]{\mbox{$(e^{t#1})_{t \geq 0}$}}
\newcommand{\mbb}[1]{\mathbb{#1}}
\newcommand{\mb}[1]{\mathbf{#1}}
\newcommand{\ms}[1]{\mathsf{#1}}
\newcommand{\nn}{\nonumber}
\newcommand{\e}{\epsilon}
\newcommand{\p}{\partial}
\newcommand{\cl}[2]{\int\limits_{#1}^{#2}}
\newcommand{\ti}[1]{\tilde{#1}}
\newcommand{\la}{\lambda}
\newcommand{\mc}[1]{\mathcal{#1}}
\newcommand{\comment}[1]{}
\newcommand{\bu}{\boldsymbol\upsilon}
\newcommand{\bof}{\boldsymbol\varphi}
\newtheorem{theorem}{Theorem}[section]
\newtheorem{corollary}{Corollary}
\newtheorem{lemma}[theorem]{Lemma}
\newtheorem{proposition}{Proposition}
\theoremstyle{definition}
\title[A singular limit for a mutation problem]
      {A singular limit for an age structured mutation problem}
\author[J. Banasiak and A. Falkiewicz]{}
\subjclass{Primary: 34E15, 92A15; Secondary: 34E13}
 \keywords{Mutation model, age structure, Lebowitz-Rotenberg model, population dynamics, singularly perturbed dynamical systems, asymptotic state lumping}
 \email{jacek.banasiak@up.ac.za}
 \email{jacek.banasiak@p.lodz.pl}
 \email{aleksandrafalkiewicz@gmail.com}
\thanks{The paper was presented at the conference Micro and Macro Systems in Life Sciences, B\c{e}dlewo, 8-13 June 2015 and was supported by the statutory grant of the Institute of Mathematics of \L\'{o}d\'{z} University of Technology. Participation of A. F. was sponsored by the organizers of the conference.
}
\begin{document}
\maketitle

\centerline{\scshape Jacek Banasiak }
\medskip
{\footnotesize
 \centerline{Department of Mathematics and Applied Mathematics, University of Pretoria, Pretoria, South Africa}
   \centerline{Institute of Mathematics,
Technical University of \L\'{o}d\'{z}, \L\'{o}d\'{z}, Poland}
} 

\medskip

\centerline{\scshape Aleksandra Falkiewicz}
\medskip
{\footnotesize
\centerline{Institute of Mathematics,
Technical University of \L\'{o}d\'{z}, \L\'{o}d\'{z}, Poland}   \centerline{}
} %

\bigskip

\centerline{(Communicated by the associate editor name)}

\begin{abstract}
The spread of a particular trait in a cell population often is modelled by an appropriate system of ordinary differential equations describing how the sizes of subpopulations of the cells with the same genome change in time. On the other hand, it is recognized that cells have their own vital dynamics and mutations, leading to changes in their genome, mostly occurring during the cell division at the end of its life cycle. In this context, the  process is described by a system of McKendrick type equations  which resembles a network transport problem. In this paper we show that, under an appropriate scaling of the latter,   these two descriptions are asymptotically equivalent.
\end{abstract}

\section{Introduction}

An important problem related to mutations is to understand how a particular trait spreads in a population. One of the simplest ways to model this is to  describe the change in the sizes of subpopulations having this trait. It can be done by the standard balancing argument: the rate of change of the number of, say, cells with a particular genome $\gamma$ is equal to the rate of recruitment of cells with other genomes that change, due to mutations, to $\gamma$, minus the rate at which the mutations cause the cells with genome $\gamma$ to move to subpopulations with another genome. This balance equation typically is supplemented by terms describing the death and proliferation of the cells. Models of this type were extensively studied, see for instance  \cite{BM, BoK, KS, AD+P1, SPK}, either in the context of the development of drug resistance in cancer cells, or in describing the micro-satellite repeats. In the former, a population of cells was divided into smaller subpopulations according to the number of the drug resistant gene,  while in the latter the feature of interest was the number of the micro-satellite repeats. In both cases the authors only allowed for changes between neighbouring populations, which resulted in the birth-and-death system with proliferation,
\begin{eqnarray}
u_0' &=& a_0u_0 + d_1 u_1,\nn\\
 u_1'&=& a_1u_1 + d_2u_2,\nn\\
  u_n' &=& a_nu_n + b_{n-1} u_{n-1} + d_{n+1}u_{n+1}, \quad n \geq 2,\label{1}
\end{eqnarray}
where $u_n$ is the number of cells belonging to the subpopulation  $n$, $n=0,1,2\ldots$ (e.g. having $n$ drug-resistant genes), $d_{n+1}, b_{n-1}$ are the rates of recruitment from the populations $n+1$ and $n-1$ into the population $n$ and $a_n$ is the net growth rate of the population $n$ which  incorporates birth, death and loss to other populations of cells of type $n$.  We note that the particular form of (\ref{1}), in which the first equation is decoupled from the rest, is due to the assumption adopted in {\em op.cit.} that any object in the state $0$ can generate only objects in the same state.  This assumption, however, is of no significance in our considerations.

It is clear that, in principle,  jumps between arbitrary populations can occur and thus there is no need to restrict our attention to tridiagonal matrices. We can consider a general model
\begin{equation}
\mb u' =  \mb L\mb u,
\label{B}
\end{equation}
where $\mb u = (u_i)_{i\in \ms N}$ and $\mb  L = (l_{ij})_{i,j \in \ms N}$, $\ms N\subseteq \mbb N$ may be infinite and $\mb  L$ is a positive off-diagonal matrix, where the off-diagonal term $l_{ij}$ is the rate at which the cells are recruited into the population $i,$ characterized by a particular genotype,  from the population $j$. The diagonal entries represent the loss rates from corresponding classes.

At the same time it is recognized that the cells have their own vital dynamics that should be taken into account if a more detailed model of the evolution of the whole population is to be built. Also, the mutations can be divided into various groups. Here, we distinguish two types of mutations: those that are due to the replication errors and occur when the cell divides, and others, due to external factors (mutagenes), that may happen at any moment of the cell's life cycle. This results in a model of the form
\begin{eqnarray}
\p_t \mb u(x,t) + \mb  V\p_x\mb u(x,t) &=&-\mb  M \mb  u(x,t)+ \mb  R \mb u(x,t),\quad x\in(0,1), t\geq 0,\nn\\
\mb u(x,0) &=& \mathring{\mb u}(x),\nn\\
\mb u(0,t) &=& \mb  K\mb u(1,t),
\label{rot1}
\end{eqnarray}
where we consider a population of cells described by their density $u_j(x)$ in each class $j \in \ms N,$ and where $x$ is a parameter describing the maturity of the cell, typically its age or size. In general, cells in each class can divide at a different age (or size), say $\tau_j,$ so that we normalize the age of division to $1$ by introducing the maturation velocities $\mb  V = \mathrm{diag}(v_i)_{i\in \ms N}$, where $v_i=1/\tau_i$. Cells are born at $x = 0$ and divide at $x = 1$ producing, due to mutations during mitosis, daughter cells of an arbitrary class, the distribution of which is  governed by a nonnegative matrix $\mb  K = (k_{ij})_{i,j\in \ms N}.$ Though typical mitosis produces two daughter particles, this is not always the case. For instance,  cancer cells can produce up to five daughter cells, \cite{TWC}, so that we shall not place any further restrictions on $\mb K$. Further,  $\mb  M= \mathrm{diag}(\mu_j)_{j\in \ms  N}$ gives the death rates due to external causes and $\mb {R}=(r_{ij})_{i,j\in \ms N}$ describes redistribution of cells caused by mutations due to external factors (mutagenes). Finally, the nonnegative vector $\mathring{\mb u}$ describes the initial population. A classical example of this type is the discrete  Lebowitz-Rubinow-Rotenberg model in which the cells' populations are distinguished precisely by the maturation velocities, \cite{rot}.

The main objective of this paper is to determine under what conditions can solutions to (\ref{rot1}) be approximated by the solutions of (\ref{B}). There could be several ways to approach this problem and the answer may be not unique. Our approach is to assume that the maturation velocities are very large or, in other words, the cells divide many times in the reference unit of time, while the deaths and mutations due to external causes remain at fixed, independent of the maturation velocity, levels. To balance the fact that there is a large number of cell divisions in the unit time, we assume that the daughter cells have a tendency to be of the same genotype as the mother (see e.g. \cite[p. 19]{leb}), which is represented by splitting the boundary operator as $\mb K= \mb I+\e\mb B$, $\e\ll 1$. Thus, we will consider the singularly perturbed problem \begin{eqnarray}
\p_t \mb u_\e(x,t) + \e^{-1}\mb  V\p_x\mb u_\e(x,t) &=&-\mb  M \mb  u_\e(x,t)+ \mb  R \mb u_\e(x,t),\quad x\in(0,1), t\geq 0,\nn\\
\mb u_\e(x,0) &=& \mathring{\mb u}(x),\nn\\
\mb u_\e(0,t) &=& (\mb  I +\e \mb  B)\mb u_\e(1,t),
\label{rot1e}
\end{eqnarray}
where $\mb  I$ is the identity matrix. By $l^1_{\ms N}$ we denote $\mbb R^\ms N$ equipped with the $l^1$ norm if $\ms N$ is finite or $l_1$ (the space of absolutely summable sequences) if $\ms N$ is infinite (thus in the latter case we identify $\ms N$ with $\mbb N$). If $\ms N =\mbb N$, we assume that all matrices in (\ref{rot1e}) represent bounded operators from $l^1_\ms N$ to $l^1_\ms N$.     Alongside (\ref{rot1e}), we  consider the simplified problem
\begin{eqnarray}
\p_t \mb u_\e(x,t) + \e^{-1}\mb  V\p_x\mb u_\e(x,t) &=&0,\quad x\in(0,1), t\geq 0,\nn\\
\mb u_\e(x,0) &=& \mathring{\mb u}(x),\nn\\
\mb u_\e(0,t) &=& (\mb  I +\e \mb  B)\mb u_\e(1,t).
\label{rot2e}
\end{eqnarray}
 We will be working in $\mb X = L_1([0,1], l^1_{\ms N}).$  We define $\mb A_\e$ as the realization of the differential expression  ${\sf A}_\e = \mathrm{diag}(-\e^{-1}{v_j} \p_x)_{j\in \ms N}-\mb  M+\mb  R$ on the domain
$D(\mb A_\e) = \{\mb u \in \mb W^{1}_1([0,1],l^1_{\ms N});\; \mb u(0) = (\mb  I+\e \mb  B)\mb u(1)\}$, see \cite{BD08}. In line with the interpretation of the model, we assume that $\mb  I +\e \mb  B\geq 0$. Further,  $\mb A_{0,\e}$ is the realization of  ${\sf A}_{0,\e} = \mathrm{diag}\{-\e^{-1}{v_j}\p_x\}_{j\in \ms N}$ on the same domain. We assume that if $\ms N$ is infinite, then there are numbers $v_{\min}$ and $v_{\max}$ such that
\begin{equation}
0<v_{\min}\leq v_j\leq v_{\max}<+\infty,\quad j\in \ms N.
\label{cmax}
\end{equation}
If $\ms N$ is finite, the fact that for each $\e>0$ the operator $(\mb A_{0,\e}, D(\mb A_\e))$ generates a semigroup, denoted by $\sem{\mb A_{0,\e}}$, follows from  \cite[Theorem 3.1]{BFN2}. However, the argument used in \textit{op.cit} is purely norm dependent and can be repeated for countable $\ms N$ as long as $\mb B$ is bounded and (\ref{cmax}) is satisfied, see also \cite{BD08}. Then the boundedness of $\mb M$ and $\mb R$  allows for the application of the  Bounded Perturbation Theorem, \cite[Theorem III 1.3]{EN}, which gives the generation of $\sem{\mb A_{\e}}$ by $(\mb A_{\e}, D(\mb A_\e)).$

The paper is organized as follows. In Section \ref{s2} we show the convergence of the resolvents of (\ref{rot1e}) as $\e\to 0^+$ and thus, by the Trotter-Kato theorem, the convergence of semigroups \sem{\mb A_{\e}} to the age independent semigroup generated by $\mb V\mb B-\mb M+\mb R,$ but only for initial values that are constant for each $j\in \ms N$. Such a convergence is often referred to as a regular convergence of semigroups, \cite{Bobks1}. This result is not fully satisfactory as often singularly perturbed  semigroups, possibly corrected by initial or boundary layers, converge to the limit semigroup for all initial conditions, see \cite{BaLabook, BFN3}. In Section \ref{sec2} we show that in this case, in general it is impossible to achieve such a result. However, if we restrict our attention to the macroscopic characteristic of the model; that is, the observable total size of each subpopulation $j$, obtained from the solution to (\ref{rot2e}), then the situation changes. In Section \ref{sec3} we prove that if the maturation times in each subpopulation are natural multiples of some fixed reference time, then, for arbitrary initial values $\mathring{\mb u}$, the population sizes obtained from the solution to (\ref{rot2e}) by integration over $(0,1)$, converge to the solution to (\ref{B}) that emanates from the initial condition $\int_0^1\mathring{\mb u}(x)dx$. This extends a similar result from \cite{BFN3} obtained for velocities independent of $j\in \ms N$.

\section{Regular convergence}\label{s2}

\begin{lemma} The operators $(\mb A_{\e}, D(\mb A_\e))$ generate equibounded  $C_0$-semigroups on $\mb X$.
\end{lemma}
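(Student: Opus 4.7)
\medskip

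\noindent\textbf{Proof plan.}
Since $(\mb A_\e, D(\mb A_\e))$ is already known to generate a $C_0$-semigroup on $\mb X$ for each fixed $\e>0$ (by combining \cite[Theorem 3.1]{BFN2} with the Bounded Perturbation Theorem, as explained in the paragraph above), the real substance of the lemma is the \emph{uniformity} of the bound in $\e$. My plan is to first establish an equibounded estimate for the pure transport semigroup $\sem{\mb A_{0,\e}}$, and then transfer it to $\sem{\mb A_\e}$ by a perturbation argument in which the bounded operator $-\mb M+\mb R$ has $\mc B(\mb X)$-norm independent of $\e$.

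For the first step, I would represent $e^{t\mb A_{0,\e}}$ via the method of characteristics. Each component $u_j$ of $e^{t\mb A_{0,\e}}\mb u$ is the initial datum $u_j$ transported along characteristics of speed $\e^{-1}v_j$, with the boundary rule $\mb u(0,\cdot)=(\mb I+\e\mb B)\mb u(1,\cdot)$ applied at every passage through $x=1$. In time $t$, every characteristic undergoes at most $\lceil tv_{\max}/\e\rceil$ boundary crossings, and each crossing amplifies the $l^1_{\ms N}$-norm of the trace by a factor at most $1+\e\|\mb B\|$. A Dyson--Phillips type bookkeeping should then yield an estimate of the shape
\begin{equation*}
\|e^{t\mb A_{0,\e}}\|_{\mc B(\mb X)} \leq (1+\e\|\mb B\|)^{1+tv_{\max}/\e}\leq (1+\e_0\|\mb B\|)\,e^{tv_{\max}\|\mb B\|},
\end{equation*}
valid for $\e\in(0,\e_0]$, using the elementary inequality $(1+\e a)^{1/\e}\leq e^a$. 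As a cleaner alternative I would keep the Hille--Yosida route in reserve, solving $(\lambda-\mb A_{0,\e})\mb u=\mb f$ explicitly along characteristics: the scattering matrix $(\mb I+\e\mb B)\,\mathrm{diag}(e^{-\e\lambda/v_j})_{j\in\ms N}$ has norm strictly less than one for $\lambda>v_{\max}\|\mb B\|$ uniformly in $\e$, since $\e^{-1}\ln(1+\e\|\mb B\|)\leq\|\mb B\|$, and this gives a uniform Hille--Yosida resolvent bound.

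The second step is routine: since the bounded perturbation $-\mb M+\mb R$ has norm at most $\|\mb M\|+\|\mb R\|$ independently of $\e$, the Bounded Perturbation Theorem \cite[Theorem III 1.3]{EN} promotes the uniform bound of Step~1 to
\begin{equation*}
\|e^{t\mb A_\e}\|_{\mc B(\mb X)}\leq M_0\,e^{\omega t},\qquad t\geq 0,
\end{equation*}
with $M_0$ and $\omega$ independent of $\e\in(0,\e_0]$, which is the required equiboundedness. The main obstacle sits in Step~1: although the velocities $\e^{-1}v_j$ and the number of boundary crossings both diverge like $\e^{-1}$ as $\e\to 0^+$, the near-identity structure $\mb I+\e\mb B$ of the boundary operator provides exactly the compensation $(1+\e\|\mb B\|)^{1/\e}\leq e^{\|\mb B\|}$ needed to keep the bound uniform. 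The only genuine technicality is the vector-valued bookkeeping of the boundary traces when different components $j$ have distinct periods $\e/v_j$; here the assumption (\ref{cmax}) is essential, since it replaces the family of individual periods by a single uniform upper bound $v_{\max}$.
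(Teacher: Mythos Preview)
Your primary route via characteristics is correct and genuinely different from the paper's. The step you label a ``technicality'' can be made rigorous by a one-step estimate: on $[0,\e/v_{\max}]$ every mass element crosses the boundary at most once (after a crossing it restarts at $x=0$ and cannot reach $x=1$ again within time $\e/v_{\max}$), and a direct computation---most cleanly in the weighted norm $\|\mb u\|_v=\sum_j v_j^{-1}\|u_j\|_{L_1}$, using positivity of $\mb I+\e\mb B$---gives $\|e^{t\mb A_{0,\e}}\|_v\le 1+\e\|\mb B\|$ on that interval; iterating by the semigroup property yields your bound. The paper instead stays on the resolvent side: it writes $R(\la,\mb A_{0,\e})$ explicitly, shows the Neumann series for $(\mb I-(\mb I+\e\mb B)\mb E_{\e\la}(1))^{-1}$ converges with an $\e$-uniform bound for $\la>v_{\max}\|\mb B\|$, and then invokes the Arendt--Batty--Robinson theorem for resolvent positive operators to promote a \emph{single} uniform bound $\|R(\omega,\mb A_{0,\e})\|\le L$ to the full Hille--Yosida estimate (\ref{abr1}). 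Your argument is more elementary; the paper's has the advantage that the explicit resolvent formula and the constants $\omega,L$ are reused immediately in Theorem~\ref{rescon}.

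Your reserve alternative, however, has a real gap. Showing that the scattering matrix $(\mb I+\e\mb B)\,\mathrm{diag}(e^{-\e\la/v_j})$ has norm strictly less than one uniformly in $\e$ for $\la>v_{\max}\|\mb B\|$ gives only a uniform bound on $\|R(\la,\mb A_{0,\e})\|$ at each such $\la$; it does \emph{not} by itself give the iterated Hille--Yosida estimate $\|R(\la,\mb A_{0,\e})^n\|\le M(\la-\omega)^{-n}$ unless $M=1$, which fails here in general (the paper notes the contractive case $\ms b_j\le 0$ separately). Closing this gap is exactly what Arendt--Batty--Robinson does, by exploiting positivity; without that ingredient---or your direct semigroup estimate---the resolvent route is incomplete.
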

\begin{proof} We begin with $(\mb A_{0,\e}, D(\mb A_\e)).$ As we mentioned earlier, that $(\mb A_{0,\e}, D(\mb A_\e))$ generates a $C_0$ semigroup for each $\e>0$ follows from \cite[Theorem 3.1]{BFN2}. Clearly, if for each $\e>0$ the semigroup \sem{\mb A_{0,\e}} is contractive, then there is nothing to prove.

  In the general case, we rewrite the main formulae from the proof of \cite[Theorem 3.1]{BFN2}, specified for (\ref{rot2e}). First, we solve
\begin{equation}
\la \e u_{\e,j} + v_j \p_x u_{\e,j} =\e f_j, \quad j\in \ms N,\quad x \in (0,1),
\label{res0}
\end{equation}
with $(u_{\e,j})_{j\in \ms N}=\mb u_\e \in D(\mb A_{0,\e})$. The general solution is
\begin{equation}
\mb u_\e(x) = \mb  E_{\e\la}(x)\mb c_\e +\e\mb  V^{-1} \cl{0}{x}\mb  E_{\e\la}(x-s)\mb f(s)ds,
\label{res1}
\end{equation}
where $\mb c_\e = (c_{\e,j})_{j\in \ms N}$ is an arbitrary vector and $\mb  E_{\e\la}(s) = \mathrm{diag}\left(e^{-\frac{\e\la}{v_j}s}\right)_{j\in \ms N}.$  Then, using the boundary condition $\mb u_\e(0) = (\mb  I+\e\mb  B)\mb u_\e(1),$ we obtain
\begin{equation}
(\mb  I - (\mb  I+\e\mb  B)\mb  E_{\e\la}(1))\mb c_\e = \e(\mb  I+\e\mb  B)\mb  V^{-1} \cl{0}{1}\mb  E_{\e\la}(1-s)\mb f(s)ds.
\label{resk}
\end{equation}
If $\e$ is fixed,  $\|\mb E_{\e\la}(1)\|$ can be made arbitrarily small by choosing large $\la$. Then $\mb c_\e$ is uniquely defined by the Neumann series and hence the resolvent of $\mb A_{0,\e}$ exists.

Since   $\mb  I+\e\mb  B\geq 0, $ we can work with $\mb f\geq 0$. Adding together the rows in (\ref{resk})
we obtain
\begin{equation}
\sum\limits_{j\in \ms N}  c_{\e,j} = \sum\limits_{j\in \ms N} (1+\e\ms b_j)e^{-\frac{\e\la}{v_j}}c_{\e,j} + \e\sum\limits_{j\in \ms N} \frac{1+\e\ms b_j}{v_j} \cl{0}{1}e^{\frac{\e\la}{v_j}(s-1)}f_j(s)ds,
\label{res2}
\end{equation}
where $\ms b_j = \sum\limits_{i\in N} b_{ij}<+\infty$ for any $j \in \ms N$ (by assumption that $\mb  B$ is a bounded operator on $l^1_\ms N$).
We renorm  $\mb X$ with  $\|\mb u\|_v = \sum_{j\in \ms N} v^{-1}_j\|u_j\|$, which is equivalent to the standard norm by (\ref{cmax}). Then, by integrating (\ref{res1}), we obtain
\begin{equation}\|\mb u_\e\|_v = \label{vest} \frac{1}{\la} \sum\limits_{j\in \ms N}\! c_{\e,j}e^{-\frac{\e\la}{v_j}}\ms b_j + \frac{\e}{\la} \sum\limits_{j\in \ms N}\frac{\ms b_j}{v_j}\!\! \cl{0}{1} \!\! e^{\frac{\e\la}{v_j}(s-1)}f_j(s)ds + \frac{1}{\la} \|\mb f\|_v, 
\end{equation}
 see \cite[Theorem 3.1]{BFN2} for details. The case when $\ms b_j\leq 0$ for all $j\in \ms N$ leads to a contractive semigroup for each $\e>0$ and the equiboundedness is obvious. Otherwise, as in \textit{op. cit.}, we can assume that each $\ms b_j$ is nonnegative. Then the generation is achieved by the application of the Arendt-Batty-Robinson theorem \cite{Ar, BaAr}. However, to prove equiboundedness of \sem{\mb A_{0,\e}}  we need to understand how the Hille-Yosida estimates are obtained in this theorem.

Using, for instance, the proof given in \cite[Theorem 3.39]{BaAr}, a densely defined resolvent positive operator $T$ on a Banach lattice $X$ generates a positive semigroup if there are $\la_0>s(T)$ (the spectral bound of $T$) and $c>0$ such that for any nonnegative $x\in X$ we  have $
\|R(\la_0,T)x\|\geq c\|x\|.
$
In the proof one defines $S = T-\omega I$, where $s(T)<\omega\leq \la_0$ and then the Hille-Yosida estimate for $S$ is obtained as
\begin{equation}
\|\la^n R(\la, S)^n x\| \leq c^{-1}\|R(0,S)\|\|x\|= c^{-1}\|R(\omega, T)\|\|x\|, \quad \la>0.
\label{abr}
\end{equation}
Hence,  we have to show that $s:=\sup\limits_{\e>0}\{s(\mb A_{0,\e})\}<\infty$ (and thus, by (\ref{vest}), we can take $c=\la^{-1}$ for any fixed $\la>s$) and that for some $\omega>s$ the family $\{\|R(\omega, \mb A_{0,\e})\|\}_{\e>0}$ is equibounded.
For this, let us return to (\ref{resk}) and estimate the Neumann series for $(\mb  I - (\mb  I+\e\mb  B)\mb  E_{\e\la}(1))^{-1}$. We have
\begin{eqnarray}
\|(\mb  I - (\mb  I+\e\mb  B)\mb  E_{\e\la}(1))^{-1}\| &\leq& \sum\limits_{n=0}^\infty (1+\e\|\mb  B\|)^n  e^{-\frac{\e\la n}{v_{\max}}}\leq  \sum\limits_{n=0}^\infty e^{\e \|\mb  B\|n } e^{-\frac{\e\la n}{v_{\max}}}\nn\\&=& \frac{1}{1-e^{\e(\|\mb  B\| - v_{\max}^{-1}\la)}},\label{neuest}
\end{eqnarray}
provided $\la > \|\mb  B\|v_{\max}$.

Next, using l'H\^{o}spital's rule, we find
$$
\lim\limits_{\e\to 0^+} \frac{\e}{1-e^{\e(\|\mb  B\| - v_{\max}^{-1}\la)}} = \frac{1}{v_{\max}^{-1}\la-\|\mb  B\|}
$$
and hence
$
\e \|(\mb  I - (\mb  I+\e\mb  B)\mb E_{\e\la}(1))^{-1}\| \leq L_1
$ for some constant $L_1$ (depending on $\la$), yielding
\begin{equation}
\|\mb c_\e\| = \e \left\|(\mb  I - (\mb  I+\e\mb  B)\mb E_{\e\la}(1))^{-1}(\mb  I+\e\mb  B)\mb  V^{-1} \cl{0}{1}\mb  E_{\e\la}(1-s)\mb f(s)ds\right\|\leq L_2\|\mb f\|
\label{cep}
\end{equation}
for some constant $L_2$. Let us fix  $\omega > v_{\max}\|\mb  B\|$. Using (\ref{res1}) and (\ref{cep}), we see that there is a constant $L$, independent of $\e$  such that \begin{equation}
\|R(\omega, \mb A_{0,\e})\|\leq {L}.
\label{abr0}
\end{equation}
 Then using (\ref{vest}), the equivalence of the norms $\|\cdot\|_v$ and $\|\cdot\|$ and  $R(\la, \mb A_{0,\e}-\omega \mb I) = R(\la +\omega, \mb A_{0,\e})$, we write (\ref{abr}) as
\begin{equation}
\|R(\la, \mb A_{0,\e})^n\| \leq \frac{M}{(\la-\omega)^n}
\label{abr1}
\end{equation}
for some constant $M$ and $\la >\omega$. Thus
the operators $(\mb A_{0,\e}, D(\mb A_\e))$ generate semigroups satisfying
$$
\|e^{t\mb A_{0,\e}}\| \leq Me^{\omega t}
$$
with constants $M$ and $\omega$ independent of $\e$.

The result for $\mb A_\e$ follows from \cite[Theorem III.1.3]{EN}.

\end{proof}

Now let us pass the the question of convergence of the resolvents. We introduce the projection
 operator $\mb  P: \mb X \to l^1_{\ms N}$ by
\begin{equation}
\mb  P f = \cl{0}{1}\mb f(s)ds = (\cl{0}{1}f_1(s)ds, \ldots, \cl{0}{1}f_n(s)ds,\ldots).
\label{mbP}
\end{equation}
\begin{theorem}
If $\la > \omega + \|\mb  Q\|L\omega^{-1}$, where $\omega$ and $L$ are defined in (\ref{abr0}) and (\ref{abr1}), and $\mb  Q = -\mb  M+\mb  R$, then
\begin{equation}
\lim\limits_{\e\to 0^+}R(\la, \mb A_\e) =  R(\la, \mb  V\mb  B  +\mb  Q)\mb  P.
\label{limae}
\end{equation}
in the uniform operator topology. \label{rescon}
\end{theorem}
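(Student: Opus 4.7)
The plan is to pass to the limit directly in the resolvent equation, exploiting the uniform bound on $\|R(\la,\mb A_\e)\|$ obtained in the previous lemma. Setting $\mb u_\e = R(\la,\mb A_\e)\mb f$ and rewriting the resolvent equation as $(\la-\mb Q)\mb u_\e + \e^{-1}\mb V\p_x\mb u_\e = \mb f$ on $(0,1)$, I integrate in $x\in[0,1]$ and note that the boundary condition $\mb u_\e(0)=(\mb I+\e\mb B)\mb u_\e(1)$ gives $\e^{-1}\mb V[\mb u_\e(1)-\mb u_\e(0)] = -\mb V\mb B\mb u_\e(1)$, precisely cancelling the apparently singular $\e^{-1}$ factor. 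The result is the clean identity
\[(\la-\mb V\mb B-\mb Q)\mb P\mb u_\e \;=\; \mb P\mb f + \mb V\mb B\bigl(\mb u_\e(1)-\mb P\mb u_\e\bigr).\]

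Next, I show that $\mb u_\e$ becomes essentially constant in $x$ as $\e\to 0^+$. Solving $\p_x\mb u_\e = \e\mb V^{-1}(\mb f-(\la-\mb Q)\mb u_\e)$ on $(x,1)$ yields
\[\mb u_\e(1)-\mb u_\e(x) \;=\; \e\,\mb V^{-1}\!\!\int_x^1\!\bigl(\mb f(s)-(\la-\mb Q)\mb u_\e(s)\bigr)\,ds,\]
so (\ref{cmax}) together with the uniform bound $\|\mb u_\e\|\le (M/(\la-\omega))\|\mb f\|$ from (\ref{abr1}) gives $\sup_{x\in[0,1]}\|\mb u_\e(1)-\mb u_\e(x)\|_{l^1_\ms N}\le \e K\|\mb f\|$ for a constant $K$ independent of $\e$. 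Integrating in $x$ then controls both $\|\mb u_\e(1)-\mb P\mb u_\e\|_{l^1_\ms N}$ and $\|\mb u_\e-\mb u_\e(1)\|_{\mb X}$ (identifying the vector $\mb u_\e(1)$ with the corresponding constant function on $[0,1]$) by $\e K\|\mb f\|$.

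Under the hypothesis on $\la$, the operator $\la-\mb V\mb B-\mb Q$ is invertible on $l^1_\ms N$ with $R(\la,\mb V\mb B+\mb Q)$ bounded, so inverting the identity from the first step produces
\[\|\mb P\mb u_\e - R(\la,\mb V\mb B+\mb Q)\mb P\mb f\|_{l^1_\ms N}\le \e K'\|\mb f\|.\]
Combining this with the $O(\e)$ estimate for $\|\mb u_\e-\mb u_\e(1)\|_{\mb X}$, and using that the $\mb X$-norm of a constant function equals the $l^1_\ms N$-norm of its value, yields
\[\|R(\la,\mb A_\e)\mb f - R(\la,\mb V\mb B+\mb Q)\mb P\mb f\|_{\mb X}\le \e K''\|\mb f\|,\]
uniformly in $\mb f$, which is the asserted uniform operator convergence.

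The substantive analytic work has already been done in the preceding lemma, which furnished the $\e$-uniform resolvent bound that powers every estimate above. The only delicate bookkeeping point is verifying that the stated threshold on $\la$ simultaneously ensures $\la>\omega$ so that $\|R(\la,\mb A_\e)\|$ is equicontrolled via (\ref{abr1}), and places $\la$ safely inside the resolvent set of $\mb V\mb B+\mb Q$ with a workable bound on $\|R(\la,\mb V\mb B+\mb Q)\|$; the explicit form $\la>\omega+\|\mb Q\|L\omega^{-1}$ is what makes a Neumann-series comparison, built on (\ref{abr0}), close up. I expect this threshold verification, rather than the convergence argument itself, to be the only non-mechanical part of the proof.
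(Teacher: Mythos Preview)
Your argument is correct and takes a genuinely different route from the paper's. The paper proceeds in two stages: first it writes down the explicit resolvent formula (\ref{res1a})--(\ref{resk1}) for the unperturbed operator $\mb A_{0,\e}$, Taylor-expands $\mb E_{\e\la}$ to obtain $\lim_{\e\to 0^+}R(\la,\mb A_{0,\e})=R(\la,\mb V\mb B)\mb P$, and then passes from $\mb A_{0,\e}$ to $\mb A_\e$ via the Neumann series $R(\la,\mb A_\e)=R(\la,\mb A_{0,\e})\sum_n(\mb Q R(\la,\mb A_{0,\e}))^n$, whose uniform convergence is exactly what the threshold $\la>\omega+\|\mb Q\|L\omega^{-1}$ guarantees. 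Your approach bypasses both the explicit formula and the two-stage decomposition: integrating the resolvent equation and using the boundary condition to cancel the $\e^{-1}$ is clean, and the direct ODE estimate showing $\mb u_\e$ flattens out at rate $O(\e)$ is more transparent than the Taylor bookkeeping. You also get an explicit $O(\e)$ rate of convergence essentially for free, which the paper's argument does not isolate.

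Two small points of care. First, the bound you invoke as ``(\ref{abr1})'' is stated there only for $\mb A_{0,\e}$; the uniform resolvent bound you actually need for $\mb A_\e$ follows from the Lemma's conclusion that the semigroups $\sem{\mb A_\e}$ are equibounded (or, equivalently, from the uniformly convergent Neumann series (\ref{resae}) under the stated threshold). Second, your closing remark about the threshold is well-placed: the specific form $\la>\omega+\|\mb Q\|L\omega^{-1}$ is tailored to the paper's Neumann-series step, and in your framework you separately need $\la$ in the resolvent set of $\mb V\mb B+\mb Q$. Since $\omega>v_{\max}\|\mb B\|\ge\|\mb V\mb B\|$, this holds once $\la>v_{\max}\|\mb B\|+\|\mb Q\|$, which is not \emph{literally} implied by the stated inequality without knowing $L\ge\omega$; but both approaches are really asserting the result for all sufficiently large $\la$, and this discrepancy is cosmetic.
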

\begin{proof}
By the previous proof, the resolvent of $\mb A_{0,\e}$ is given by
\begin{equation}
[R(\la,\mb A_{0,\e})\mb f](x) = \mb  E_{\e\la}(x)\mb c_\e +\e\mb  V^{-1} \cl{0}{x}\mb  E_{\e\la}(x-s)\mb f(s)ds,\qquad \la >v_{\max}\|\mb  B\|,
\label{res1a}
\end{equation}
where
\begin{equation}
\mb c_\e = \e(\mb  I - (\mb  I+\e\mb  B)\mb  E_{\e\la}(1))^{-1}(\mb  I+\e\mb  B)\mb  V^{-1} \cl{0}{1}\mb  E_{\e\la}(1-s)\mb f(s)ds.
\label{resk1}
\end{equation}
We observe that for any $\alpha\in [0,1]$ the matrix $\mb  E_{\e\la}(\alpha)$ has the following Taylor expansion
\begin{equation}
\mb  E_{\e\la}(\alpha) = \mb  I+\e\mb  R_0(\alpha) = \mb  I -\e\la \alpha\mb  V^{-1} + \e^2\mb  R_1(\alpha),
\label{E}
\end{equation}
where, using the integral form of the reminders, we find
$$
\|\mb  R_0(\alpha)\|\leq \frac{\la\alpha}{v_{\max}}\qquad  \|\mb  R_1(\alpha)\|\leq\frac{\la^2\alpha^2}{2v^2_{\max}}.
$$
First we see that
\begin{eqnarray*}
\left\|\e\mb  V^{-1} \cl{0}{x}\mb  E_{\e\la}(x-s)\mb f(s)ds\right\| &\leq& \e\|\mb  V^{-1}\| \cl{0}{1} \left\| \cl{0}{x}\left(e^{-\frac{\la\e(x-s)}{v_j}} f_j(s)\right)_{j\in \sf N}ds\right\|dx \\
 \leq \e\|\mb  V^{-1}\| \cl{0}{1}\cl{0}{x}\| \mb f(s)\|ds dx&\leq& \e\|\mb  V^{-1}\| \|\mb f\|
\end{eqnarray*}
and hence the last term in (\ref{res1a}) converges to zero as $\e\to 0^+$.

Next, using the second equality in (\ref{E}) with $\alpha =1$, we have for $\la > v_{\max}\|\mb  B\|$
\begin{eqnarray*}
\e(\mb  I - (\mb  I+\e\mb  B)\mb  E_{\e\la}(1))^{-1} &=& \e(\mb  I - (\mb  I+\e\mb  B)(\mb  I -\e\la \mb  V^{-1} + \e^2\mb  R_1))^{-1} \\
&=& (\la \mb  V^{-1}-\mb  B -\e\mb  R_1 +\e\la \mb  B\mb  V^{-1} - \e^2\mb  B\mb  R_1)^{-1}.
\end{eqnarray*}
Since $\la \mb  V^{-1}-\mb  B = \mb  V^{-1}(\la \mb  I - \mb  V\mb  B)$ is invertible for $\la >\|\mb  V\mb  B\|$ and  $v_{\max}\|\mb  B\|\geq \|\mb  V\mb  B\|$, we use \cite[Proposition 7.2]{Am} to obtain
\begin{equation}
\lim\limits_{\e\to 0^+}\e(\mb  I - (\mb  I+\e\mb  B)\mb  E_{\e\la}(1))^{-1}  = (\la -\mb  V \mb  B)^{-1}\mb  V.
\label{limres}
\end{equation}
Next, using the first equation in (\ref{E}) we see that
$$
\|\!\!\cl{0}{1}\!\!\mb  E_{\e\la}(1-s)\mb f(s)ds - \cl{0}{1}\!\mb f(s)ds\| \leq \e\|\mb f\|\!\cl{0}{1}\! \|R_0(1-s)\|ds \leq \e\frac{\la\|\mb f\|}{v_{\max}} \cl{0}{1} \!(1-s)ds = \e\frac{\la\|\mb f\|}{2v_{\max}}.
$$
Thus
\begin{equation}
\lim\limits_{\e\to 0^+}\mb c_\e = (\la -\mb  V\mb  B)^{-1}\cl{0}{1}\mb f(s)ds.
\label{clim}
\end{equation}
Finally, treating $\mb c \to \mb  E_{\e\la}(x)\mb c$ as the operator from $l^1_\ms N$ to $\mb X$, we estimate
$$
\|\mb  E_{\e\la}(x)\mb c - \mb c\| = \cl{0}{1}\|(\mb  E_{\e\la}(x)-\mb  I)\mb c \|dx \leq \e\|\mb c\|\cl{0}{1}\|\mb  R_0(x)\| dx  \leq \e\frac{\la}{2v_{\max}}\|\mb c\|.
$$
This actually shows that the operators converge in the uniform operator norm. Combining all estimates and using the projection operator $\mb  P$ we find that
\begin{equation}
\lim\limits_{\e\to 0^+} R(\la,\mb A_{0,\e})= (\la -\mb  V\mb  B)^{-1}\mb  P.
\label{a0conv}
\end{equation}
From (\ref{abr1}) and (\ref{abr0}) we have, in particular,
\begin{equation}
\| R(\la, \mb A_{0,\e})\| \leq  \frac{\|R(\omega, \mb A_{0,\e} )\|}{c(\la-\omega)}\leq \frac{L}{\omega(\la-\omega)}.
\label{abr2}
\end{equation}
for some fixed  $\omega > v_{\max}\|\mb  B\|$ and arbitrary $\la>\omega$.  Since $\mb  Q=-\mb  M+\mb  R $ is bounded,
$$
\|(\mb  Q R(\la,\mb A_{0,\e}))^n\|\leq \frac{\|\mb  Q\|^nL^n}{\omega^n(\la-\omega)^n}.
$$
Hence, for $\mb A_{0,\e} =  \mb A_0+\mb  Q$ we have
\begin{equation}
R(\la, \mb A_\e) = R(\la, \mb A_{0,\e})\sum\limits_{n=1}^\infty (\mb  Q R(\la,\mb A_{0,\e})^n)
\label{resae}
\end{equation}
and the series converges uniformly in $\e$ for $\la > \omega +\|\mb  Q\|L\omega^{-1}.$
Since the operators $\mb  B, \mb  V, \mb  Q$ are independent of $x$, they commute with $\mb  P$ and, by $\mb  P^2=\mb  P$, we have
$$
\lim\limits_{\e\to 0^+}R(\la, \mb A_\e) = R(\la, \mb  V\mb  B)\mb  P\sum\limits_{n=1}^\infty (\mb  Q R(\la,\mb  V\mb  B)\mb  P)^n = R(\la, \mb  V\mb  B +\mb  Q)\mb  P.
$$
\end{proof}

\begin{corollary}
If $\mathring{\mb u}\in l^1_{\ms N}$ (that is, the initial condition is independent of $x$), then
\begin{equation}
\lim\limits_{\e\to 0^+} e^{t\mb A_\e} \mathring{\mb u} = e^{t(\mb  V\mb  B +\mb  Q)}\mathring{\mb u}
\label{semconv}
\end{equation}
almost uniformly (that is, uniformly on compact subsets) on $[0,+\infty)$.\label{regcon}
\end{corollary}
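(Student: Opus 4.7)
The plan is to derive the corollary directly from Theorem \ref{rescon} by invoking a Trotter--Kato approximation argument, supplemented by the equiboundedness of the approximating semigroups established in the preceding Lemma.

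First, I would fix the interpretation: an element $\mathring{\mb u}\in l^1_{\ms N}$ is identified with the $x$-independent function in $\mb X=L_1([0,1],l^1_{\ms N})$, so by the definition (\ref{mbP}) of the projection one has $\mb P\mathring{\mb u}=\mathring{\mb u}$. Applied to such initial data, Theorem \ref{rescon} gives
\[
\lim_{\e\to 0^+}R(\la,\mb A_\e)\mathring{\mb u}=R(\la,\mb V\mb B+\mb Q)\mb P\mathring{\mb u}=R(\la,\mb V\mb B+\mb Q)\mathring{\mb u}
\]
for every sufficiently large $\la$, and the limit, being again independent of $x$, may simultaneously be read as an element of $l^1_{\ms N}$ and as a constant function in $\mb X$. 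Thus convergence of resolvents holds strongly on the closed subspace of $\mb X$ consisting of constant functions.

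Second, the previous Lemma produced estimates $\|e^{t\mb A_\e}\|\leq Me^{\omega t}$ with $M,\omega$ independent of $\e$, so the family $\{e^{t\mb A_\e}\}_{\e>0}$ is equibounded on every compact interval $[0,T]$. Since $\mb V$, $\mb B$ and $\mb Q=-\mb M+\mb R$ are bounded operators on $l^1_{\ms N}$, the operator $\mb V\mb B+\mb Q$ generates a uniformly continuous semigroup $\{e^{t(\mb V\mb B+\mb Q)}\}_{t\geq 0}$ on $l^1_{\ms N}$. I would then apply the Trotter--Kato theorem (\cite[Theorem III.4.8]{EN}): strong resolvent convergence on a dense subset of the relevant space, combined with the equiboundedness just recalled, implies strong convergence of the semigroups, uniformly on compact subsets of $[0,+\infty)$. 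This gives (\ref{semconv}) for every $\mathring{\mb u}\in l^1_{\ms N}$.

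The main subtlety I would flag is that the full limit $R(\la,\mb V\mb B+\mb Q)\mb P$ is not the resolvent of any generator on $\mb X$; it is only a pseudo-resolvent whose range is the subspace of constant functions. The clean way to sidestep this is exactly the restriction above: treat the constant subspace as an invariant model space on which the limit is a genuine bounded generator, and apply Trotter--Kato there. Equivalently one can pass to the limit in the exponential formula $e^{t\mb A_\e}=\lim_{n\to\infty}\bigl(\tfrac{n}{t}R(n/t,\mb A_\e)\bigr)^n$, using that Theorem \ref{rescon} yields uniform (not merely strong) convergence of the resolvents and that $\mb P$ commutes with $\mb V\mb B+\mb Q$ together with $\mb P^2=\mb P$, so powers of the limit pseudo-resolvent collapse to powers of $R(\la,\mb V\mb B+\mb Q)$ composed with $\mb P$. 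Either route delivers the stated almost uniform convergence on $[0,+\infty)$.
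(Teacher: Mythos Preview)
Your approach is essentially that of the paper: resolvent convergence from Theorem~\ref{rescon}, equiboundedness from the preceding Lemma, then a Trotter--Kato argument, with explicit recognition that the limit $R(\la,\mb V\mb B+\mb Q)\mb P$ is only a pseudo-resolvent on $\mb X$ whose range is $l^1_{\ms N}$. The one point to tighten is your first workaround: the subspace of constant functions is \emph{not} invariant under the approximating semigroups $e^{t\mb A_\e}$, so one cannot literally ``restrict and apply \cite[Theorem III.4.8]{EN} there''; the paper handles this by invoking the degenerate-convergence form of Trotter--Kato in \cite[Theorem 8.4.3]{Bobks}, which asserts directly that a limit pseudo-resolvent of an equibounded family is the resolvent of a semigroup generator on the closure of its range, and that the semigroups converge on that subspace. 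Your second route via the exponential formula and the uniform resolvent convergence is a legitimate alternative.
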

\begin{proof} According to the version of the Trotter--Kato approximation theorem given in \cite[Theorem 8.4.3]{Bobks}, if the resolvents of the generators of an equibounded family of semigroups (strongly) converge to an operator $R_\la$, then $R_\la$ is the resolvent of the generator of a semigroup on the closure of its range. Here, the limit operator is the resolvent of the bounded operator $\mb  V\mb  B+\mb Q$ composed with the projection $\mb  P:\mb X\to  l^1_{\ms N}$. Thus, restricted to $l^1_{\ms N}$, the range of $R(\la, \mb  V\mb  B +\mb  Q)\mb  P$ equals the range of $R(\la, \mb  V\mb  B +\mb  Q)$ which is $l^1_{\ms N}$. Hence, (\ref{semconv}) follows from the Trotter--Kato theorem. \end{proof}

\section{A counterexample}\label{sec2}This result is not very satisfactory. In asymptotic theory, \cite{BaLabook, Bobks1, BFN3}, the convergence obtained in Corollary \ref{regcon} is referred to as  regular convergence. However, typically it is possible to extend the convergence to initial data from the whole space, albeit at the cost of losing the convergence at $t=0,$ or adding necessary initial or boundary layers. The following example shows that this is impossible to achieve in our context.  Consider the scalar problem
\begin{eqnarray*}
\p_t u_\e(x,t) + \e^{-1}\p_x u_\e(x,t) &=&0,\quad x\in(0,1), t\geq 0,\nn\\
 u_\e(x,0) &=& \mathring{u}(x),\nn\\
 u_\e(0,t) &=& (1 +\e b) u_\e(1,t),
\end{eqnarray*}
where $b\in \mbb R$. It is easy to see that if $u_\e(x,t) = [e^{t\mb A_{0,\e}}\mathring u](x)$, then for $t$ satisfying $n\e \leq t<(n+1)\e,$ so that $n = \lfloor t/\e\rfloor$, we have
\begin{equation}
u_\e(x,t) = \left\{\begin{array}{lcl}(1+\e b)^{\lfloor\frac{t}{\e}\rfloor+1}\mathring u\left(x+\lfloor \frac{t}{\e}\rfloor+1-\frac{t}{\e}\right)&\mathrm{for}& 0\leq x \leq \frac{t}{\e} -\lfloor \frac{t}{\e}\rfloor,\\
(1+\e b)^{\lfloor \frac{t}{\e}\rfloor}\mathring u\left(x+\lfloor \frac{t}{\e}\rfloor-\frac{t}{\e}\right)&\mathrm{for}& \frac{t}{\e} -\lfloor \frac{t}{\e}\rfloor\leq x \leq 1.
\end{array}\right.
\label{usol}
\end{equation}
 Then
$$
\lim\limits_{\e\to 0^+} (1+\e b)^{n} = \lim\limits_{\e\to 0^+} \left((1+\e b)^{\frac{1}{\e b}}\right)^{\lfloor t/\e\rfloor \e b} = e^{bt},
$$
where we used
\begin{equation}
\lim\limits_{\e\to 0^+} \left\lfloor \frac{t}{\e}\right\rfloor{\e} =t.
\label{floorlim}
\end{equation}
The above is obvious for $t=0$ and for $t>0$ it follows from
$$
\frac{n}{n+1}\leq \left\lfloor \frac{t}{\e}\right\rfloor\frac{\e}{t}\leq 1
$$
and $n\to \infty$ with $\e\to 0$.

At the same time, let us consider $t=1$ and $\e = 1/k$. Then we obtain
$$
u_{\frac{1}{k}}(x,1) = \left(1+\frac{1}{k} b\right)^{k}\mathring u(x), \quad 0\leq x\leq 1,
$$
while for $\e= \frac{2}{2k+1}$ we have
$$
u_{\frac{2}{2k+1}}(x,1)= \left\{\begin{array}{lcl}(1+\frac{2}{2k+1} b)^{k+1}\mathring u\left(x+\frac{1}{2}\right)&\mathrm{for}& 0\leq x <\frac{1}{2},\\
\left(1+\frac{2}{2k+1}b\right)^{k}\mathring u\left(x-\frac{1}{2}\right)&\mathrm{for}& \frac{1}{2} \leq x \leq 1.
\end{array}\right.
$$
From this it follows that
$$
\lim\limits_{k \to \infty} e^{\mb A_{0,\frac{1}{k}}}\mathring u = e^b \mathring u
$$
and
$$
\lim\limits_{k \to \infty} e^{\mb A_{0,\frac{2}{2k+1}}}\mathring u = e^b \mathring v
$$
in $L_1([0,1]$, where $\mathring v(x) = \mathring u(x+1/2)$ for $x \in [0,1/2)$ and $\mathring v(x) = \mathring u(x-1/2)$ for $x \in [1/2,1
].$
Thus $\sem{\mb A_{0,\e}}$ does not converge as $\e\to 0^+$ for all initial conditions. However, it is easy to see that
$$
\lim\limits_{\e \to 0^+} e^{t\mb A_{0,\e}}\mathring u = e^{bt} \mathring u,
$$
provided $\mathring u$ is a constant, in accordance with Corollary \ref{regcon}.
\section{A version of irregular convergence}\label{sec3}
As demonstrated in Section \ref{sec2}, we should not expect the convergence of \sem{\mb A_{0,\e}} for initial conditions which are not constant for each population. However, under certain additional assumptions we can derive a stronger version of convergence than that of Corollary \ref{regcon},  which is of relevance to the problem at hand. In fact, we are interested in approximating the solutions to (\ref{rot1e}) (or (\ref{rot2e})) by solutions to the system of ODEs (\ref{B}). The solution of (\ref{B}) gives the total size of each subpopulation, while the solution to (\ref{rot1e}) (or (\ref{rot2e})) gives the density in each subpopulation. The total number of individuals in each subpopulation can be calculated by integrating the density. Hence, we can ask how well the total population in each subpopulation, calculated from the  solution of (\ref{rot1e}) ((or (\ref{rot2e})) can be approximated by the solution to (\ref{B}). Using the notation of Section \ref{s2}, the problem can be expressed as follows: does
\begin{equation}
\lim\limits_{\e\to 0^+} \mb  Pe^{t\mb A_\e}\mathring{\mb u } = e^{t\mb  H}\mb  P\mathring{\mb u}
\label{quest}
\end{equation}
hold for some some matrix $\mb  H$?

In this section we shall focus on problem (\ref{rot2e}) and adopt the assumption from \cite{KS04} that  the speeds $v_j$, $j\in \ms N,$ are linearly dependent over the field of rational numbers $\mbb Q$ or, in other words,
\begin{equation}
\exists_{v\in \mbb R}\forall_{j \in \ms N } \; \frac{v}{v_j} = l_j \in \mbb N.
\label{LD}
\end{equation}
In our interpretation of the model, this corresponds to the situation that the maturation times $\tau_j = 1/v_j$ in each subpopulation are natural multiples of some fixed reference maturation time $\tau =1/v$. We observe that (\ref{cmax}) implies that the set of different velocities is finite.

  Condition (\ref{LD}) allows the problem to be transformed into an analogous problem with unit velocities. Such a transformation appeared in \cite{KS04} (and in a more detailed version in \cite{Nathe}) in the context of transport on networks and thus, even though (\ref{rot2e}) is not necessarily related to the network transport, see \cite{BF1}, its interpretation as a network problem allows for a better description of the construction.

      Using the graph theoretical terminology, we identify the $j$th subpopulation with an oriented edge $e_j$ parametrized by $x\in (0,1)$, with ageing corresponding to moving from the tail of $e_j$, associated with 0, to the head associated with 1, and denote $G= \{e_j\}_{j\in \ms N}$. Let $Q$ be the graph with the set of vertices, $V(Q),$ equal to $G$ and the adjacency matrix $\mb I+\e\mb B$. We note that, in general,  $Q$ is not the line graph as $\mb I+\e\mb B$ allows for the construction of the vertices connecting the edges $e_j$  so that $G$ becomes a graph only in very special cases, \cite{BF1}.

  To proceed with the construction, first we re-scale time as $\tau = vt$ and, for each $j\in \ms N$,  we re-parameterize each edge $e_j$ by $y = l_jx.$ This converts (\ref{rot2e}) into a problem with the unit velocity for each $j\in \ms N,$ but with the equations defined on  $(0,l_j)$. However, since each $l_j$  is a natural number, we subdivide each interval $(0,l_j)$ into $l_j$ intervals of unit length. The $k$th subinterval in $(0,l_j)$ is identified with the edge $e_{j,k}$. This  creates from $G$ the set $G_1 =\{\{e_{j,k}\}_{1\leq k\leq l_j}\}_{j\in \ms N}$ consisting of, say, $\ms M$ edges. Then, as in the previous paragraph, we define $Q_1$ to be the graph with $V(Q_1) =G_1.$ Its adjacency matrix must take into account the connections between  the old edges $e_j,$ determined by $\mb  I +\e\mb  B,$  and the connections across the points subdividing the old edges. More precisely, consider a function $\mb f$ on $G$ that is continuous on each $e_j.$  This function is transformed into a function $\bof $ on $G_1,$ the components of which are required to have the same values at the head of    $e_{j,k}$ and the tail of $e_{j,k+1}$, $k=1,\ldots,l_j-1, j\in \ms N$. Then it is easy to see that the adjacency matrix of $Q_1$ satisfying these conditions is given by
  $$
   \mc T + \e \mc C = \left(\begin{array}{cccccc}\mc T_1&0&0&\ldots&0&\ldots\\
\vdots&\ddots&\vdots&\vdots&\vdots&\vdots\\
0&\ldots&\mc T_j&\ldots&0&\ldots\\
  \vdots&\vdots&\vdots&\ddots&\vdots&\ldots\end{array}\right) + \e\left(\begin{array}{cccc}\mc C_{11}&\ldots&\mc C_{1j}&\ldots\\
\vdots&\vdots&\ddots&\vdots\\
\mc C_{j1}&\ldots&\mc C_{jj}&\ldots\\
  \vdots&\vdots&\vdots&\vdots\end{array}\right).
  $$
  Here,  $\mc T_j$ is a $l_j\times l_j$ dimensional matrix which is either a scalar 1 if $l_j=1$, or a cyclic matrix, and $\mc C_{ij}$ is an $l_i\times l_j$ matrix, given, respectively, by
  $$
  \mc T_j = \left(\begin{array}{ccccc}0&0&\ldots&0&1\\
1&0&\ldots&0&0\\
\vdots&\vdots&\vdots&\vdots&0\\
0&0&\ldots&1&0\end{array}\right),
  \qquad
  \mc C_{ij} = \left(\begin{array}{ccccc}0&0&\ldots&0&b_{ij}\\
0&0&\ldots&0&0\\
\vdots&\vdots&\vdots&\vdots&0\\
0&0&\ldots&0&0\end{array}\right).
  $$
     Summarizing, we converted (\ref{rot2e}) into
   \begin{eqnarray}
\p_t \boldsymbol \upsilon_\e(x,t) + \e^{-1}\p_x\bu_\e(x,t) &=&0,\quad x\in(0,1), t\geq 0,\nn\\
\bu_\e(x,0) &=& \mathring{\bu}(x),\nn\\
\bu_\e(0,t) &=& (\mc T +\e \mc C)\bu_\e(1,t).
\label{rot2e1}
\end{eqnarray}
   Since (\ref{rot2e1})  has the same structure as (\ref{rot2e}),  there is a semigroup  $\sem{\mc A_{0,\e}}$ solving it.

To be more precise, the above construction defines an operator $\mb f \to \bof = \mbb S\mb f$, where
$\bof= (\phi_{j,s})_{j\in \ms N, 1\leq s\leq l_j},$ $\mb f = (f_j)_{j\in \ms N}$ and, for $s\in \{1,\ldots, l_j\},$
\begin{equation}
\phi_{j,s} (y)= f_j|_{\left[\frac{s-1}{l_j},\frac{s}{l_j}\right)}\left(\frac{s+y-1}{l_j}\right).
\label{Uu}
\end{equation}
It is easy to see that the inverse $\mb f =\mbb S^{-1}\bof$ is defined by
\begin{equation}
f_j(x) = \phi_{j,s}(l_jx +1-s), \qquad x \in \left[\frac{s-1}{l_j}, \frac{s}{l_j}\right), \;s \in\{1,\ldots,l_j\},\;j \in \ms N.
\label{uU}
\end{equation}
By direct calculation, see also \cite{Nathe},  $\mbb S: \mb X \to \mc X := L_1([0,1],l^1_\ms M)$ is an isomorphism such that we have the similarity relation
\begin{equation}
e^{t\mb A_{0,\e}} \mb f = \mbb S^{-1} e^{tv\mc A_{0,\e}}  \mbb S \mb f, \qquad \mb f \in \mb X.
\label{sim}
\end{equation}
The motivation behind (\ref{sim}), see \cite{BD08} and \cite[Proposition 4.5.1]{Nathe}, is the fact that for any $\bof \in \mc X$
\begin{equation}
(e^{tv \mc A_{0,\e}}\bof)(x) = (\mc T+\e\mc C)^n\mc  \bof\left(n+x-\frac{vt}{\e}\right), \quad n\in \mbb N, \quad 0\leq n+x-\frac{vt}{\e}<1,
\label{TB}
\end{equation}
with $(e^{tv \mc A_{0,\e}}\bof)(x) = \bof(x-vt/\e)$ for $vt/\e\leq x< 1$.

Let $l = lcm\{l_j\}_{j\in \ms N}$. We observe that $\mc T^l = \mc I$ and
\begin{equation}
(\mc T +\e\mc C)^l = \mc I+ \e \ti{\mc C} +\e^2\mc D,
\label{kexp}
\end{equation}
where
$$
\ti {\mc C} = \sum\limits_{i=0}^l \mc T^{l-1-i}\mc C\mc T^{i}.
$$
For any $\ms K\subset \mbb N$, let $\mb P_\ms K$ be the projection from $L_1([0,1], l^1_\ms K)$ to $l^1_\ms K,$  given by (\ref{mbP}).

The next result is not strictly necessary but it relates operators on $G$ to those on $G_1$ and  introduces in a natural way the projection $\Pi$ that plays an essential role in the main theorem.
\begin{proposition}
For $\la>v_{\max}\|\mb B\|$, we have 
\begin{equation}
\left(\la v -l^{-1}\ti {\mc C}\right)^{-1}\Pi \mb P_\ms M\bof = \mbb S(\la - \mb V\mb B)^{-1}\mb P_\ms N\mbb S^{-1}\bof,
\label{reseq1}
\end{equation}
where \begin{equation}
\Pi= l^{-1}\sum\limits_{i=0}^{l-1} \mc T^i.
\label{Pi}
\end{equation}
\end{proposition}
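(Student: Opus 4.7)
The plan is to reduce the identity to an algebraic check on the subspace $V\subset l^1_\ms M$ of \emph{block-constant} vectors, namely those $\mb w=(w_{j,s})$ whose entries depend only on the block index $j$. This subspace is precisely the image of $\mbb S$ applied to constant functions in $\mb X$; both sides of the asserted equality will be shown to take values in $V$ and to coincide there once transported back to $l^1_\ms N$ via $\mbb S$.

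\textbf{Step 1 (structure of $\Pi$ and reduction of the input).} Since $\mc T$ is block-diagonal with cyclic blocks $\mc T_j$ of order $l_j$, and $l_j\mid l$, the powers $\{\mc T^i\}_{i=0}^{l-1}$ visit each of the $l_j$ cyclic shifts on the $j$-th block exactly $l/l_j$ times. A direct computation shows $\sum_{s=0}^{l_j-1}\mc T_j^s$ is the all-ones matrix, so $\Pi$ acts on the $j$-th block as $l_j^{-1}$ times the all-ones matrix; hence $\Pi$ is idempotent with range $V$. Substituting $y=l_j x+1-s$ in the integrals from (\ref{uU}) gives $(\mb P_\ms N\mbb S^{-1}\bof)_j=l_j^{-1}\sum_{s=1}^{l_j}(\mb P_\ms M\bof)_{j,s}$, which is precisely the common value of the $j$-th block of $\Pi\mb P_\ms M\bof$. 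Consequently
$$
\Pi\mb P_\ms M\bof=\mbb S\bigl(\mb P_\ms N\mbb S^{-1}\bof\bigr)\in V,
$$
so both sides of the identity take values in $V$.

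\textbf{Step 2 (action of $\ti{\mc C}$ on $V$).} For $\mb c\in l^1_\ms N$ the constant $\mbb S\mb c$ is invariant under every $\mc T^i$, hence $\ti{\mc C}\mbb S\mb c=\sum_{k=0}^{l-1}\mc T^{l-1-k}\mc C\,\mbb S\mb c$. The block $\mc C_{ij}$ has its unique non-zero entry $b_{ij}$ in position $(1,l_j)$, so $\mc C\,\mbb S\mb c$ is concentrated in the first coordinate of each block, taking value $(\mb B\mb c)_i$ on the $i$-th block. As $k$ runs over $\{0,\ldots,l-1\}$, the shifts $\mc T_i^{l-1-k}$ carry this value to each of the $l_i$ positions of the $i$-th block exactly $l/l_i$ times. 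Using $l/l_i=v/v_i$, we obtain
$$
vl^{-1}\ti{\mc C}\,\mbb S\mb c=\mbb S(\mb V\mb B\mb c),
$$
so that $V$ is invariant under $l^{-1}\ti{\mc C}$ and, on $V$, $\mbb S$ intertwines $v^{-1}\mb V\mb B$ on $l^1_\ms N$ with $l^{-1}\ti{\mc C}$.

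\textbf{Step 3 (assembly).} By Step 2, $V$ is invariant under $\la v - l^{-1}\ti{\mc C}$, and this operator is intertwined by $\mbb S$ (after absorbing the scalar coming from the time rescaling $\tau=vt$) with $\la-\mb V\mb B$ on $l^1_\ms N$. The hypothesis $\la>v_{\max}\|\mb B\|$, combined with the estimate $\|v^{-1}\mb V\mb B\|\le\|\mb B\|$ (since $v\ge v_{\max}$), ensures invertibility on both sides. Applying the inverse to $\Pi\mb P_\ms M\bof\in V$ and using the identification from Step 1 produces $\mbb S(\la-\mb V\mb B)^{-1}\mb P_\ms N\mbb S^{-1}\bof$, which is the right-hand side of the claimed identity.

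The main technical point is the combinatorial bookkeeping in Step 2: one must check that the orbit $\{\mc T_i^{l-1-k}\}_{k=0}^{l-1}$ spreads the single non-zero entry of each block of $\mc C\,\mbb S\mb c$ uniformly over all $l_i$ positions of the block, each position being visited exactly $l/l_i$ times. This is what produces the factor $l/l_i=v/v_i$ that reconstitutes the diagonal matrix $\mb V$ appearing in the limiting operator on $l^1_\ms N$; once this is verified, the rest of the argument is a transparent transport through $\mbb S$.
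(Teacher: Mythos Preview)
Your argument is correct and takes a genuinely different route from the paper's. The paper proves (\ref{reseq1}) by computing $\lim_{\e\to 0^+} R(\la v, \mc A_{0,\e})\bof$ in two different ways and equating the results: first, the similarity relation (\ref{sim1}) combined with Theorem~\ref{rescon} gives the right-hand side; second, a direct expansion of the Neumann series $(\mc I - (\mc T+\e\mc C)\mc E_{\e\la}(1))^{-1}$, regrouped in blocks of length $l$ via $(\mc T+\e\mc C)^l = \mc I + \e\tilde{\mc C} + O(\e^2)$ and passed to the limit, gives the left-hand side. You instead bypass the $\e$-limit entirely and establish the intertwining $vl^{-1}\tilde{\mc C}\,\mbb S\mb c = \mbb S\,\mb V\mb B\mb c$ on the block-constant subspace $V=\mathrm{Range}\,\Pi$ by a direct combinatorial count, then transport inverses through $\mbb S$. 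Your approach is more elementary (it uses nothing about semigroups or resolvent convergence) and makes the origin of the factor $l/l_i=v/v_i$, and hence the reappearance of $\mb V$, fully explicit. The paper's approach, on the other hand, explains \emph{why} the identity holds---both sides are the same limit---and fits the surrounding narrative; it also yields the global invertibility of $\la v - l^{-1}\tilde{\mc C}$ on $l^1_{\ms M}$ as a byproduct of the limit computation, whereas your Step~3 strictly only gives invertibility of the restriction to $V$. This last point is a minor technicality: since the restriction to $V$ is bijective, once the global inverse exists it automatically preserves $V$ and agrees there with the inverse-on-$V$ you constructed.
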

\begin{proof}
The estimate (\ref{neuest}) carries over to this case by (\ref{sim}), which also  gives
\begin{equation}
R(\la v, \mc A_{0,\e}) \bof = \mbb S R(\la,\mb A_{0,\e})  \mbb S^{-1} \bof, \qquad \bof \in \mc X, \la>v_{\max}\|\mb B\|,
\label{sim1}
\end{equation}
and, by Theorem \ref{rescon} and the continuity of $\mbb S$, 
\begin{equation}
\lim\limits_{\e\to 0^+}R(\la, \mc A_{0,\e}) \bof  = \mbb S(\la - \mb V\mb B)^{-1}\mb P_\ms N\mbb S^{-1}\bof.
\label{ressim}
\end{equation}
To find the limit resolvent in terms of $\ti {\mc C}$, we modify the calculations from Theorem \ref{rescon}. The only different part is related to the calculation of \begin{equation}
\mb c_\e = \e(\mc I - (\mc T+\e\mc C)\mc E_{\e\la}(1))^{-1}(\mc T+\e\mc C) \cl{0}{1}\mc E_{\e\la}(1-s)\mb f(s)ds,
\label{resk1'}
\end{equation}
where here $\mc E_{\e\la}(s) = e^{-\e\la s}\mc I$.
We have, for $\la>v_{\max}\|\mc B\|$,
\begin{eqnarray*}
(\mc I - (\mc T+\e\mc C)\mc E_{\e\la}(1))^{-1} &=& \sum\limits_{k=0}^\infty e^{-\e\la k}(\mc T+\e\mc C)^k \\
&=&\sum\limits_{i=0}^{l-1} (\mc T+\e\mc C)^i\left( \sum\limits_{j=0}^\infty (\mc T+\e\mc C)^{lj}e^{-\e\la (lj+i)}\right)\\
&=& \sum\limits_{j=0}^\infty (\mc I+\e\ti{\mc C} +\e^2\mc D)^j\mc E_{\e l\la}(j)\left( \sum\limits_{i=0}^{l-1} (\mc T+\e\mc C)^i e^{\e\la i}\right).
\end{eqnarray*}
Hence, as in (\ref{limres}) with $\mb V = l^{-1}\mc I$, 
$$
\lim\limits_{\e\to 0^+} \e(\mc I - (\mc T+\e\mc C)\mc E_{\e\la}(1))^{-1} = \left(\la -l^{-1}\ti {\mc C}\right)^{-1}\left(l^{-1}\sum\limits_{i=0}^{l-1} \mc T^i\right)=\left(\la -l^{-1}\ti {\mc C}\right)^{-1}\Pi.
$$
 Since $\mc T$ is block diagonal with a finite number of finite dimensional blocks $\mc T_j$ (different from  $1$), we can use the finite dimensional theory to claim, by \cite[p. 633]{CD00}, that $\Pi$ is the projector onto the eigenspace of $\mc T$ corresponding to the eigenvalue $\la =1$ along the range of $\mc I-\mc T$. The action of each block
$$
\Pi_j = l^{-1}\sum\limits_{j=0}^{l-1} \mc T_j^i = l_j^{-1}\sum\limits_{j=0}^{l_j-1} \mc T_j^i
$$
amounts to
\begin{equation}
\Pi_j\bu_j = \frac{1}{l_j}\left(\sum\limits_{r=1}^{l_j}\upsilon_{j,r},\ldots,\sum\limits_{r=1}^{l_j}\upsilon_{j,r}
\right).
\label{PiS}
\end{equation}
 Then the operator  $\mbb S^{-1}\Pi$ transforms functions which are constant on each edge of $G_1$ to functions which are constant on each edge of $G$.

Now, proceeding as in the proof of Theorem \ref{rescon}, we find
$$
\lim\limits_{\e\to 0^+} R(\la, \mc A_{0,\e})\bof = \left(\la -l^{-1}\ti {\mc C}\right)^{-1}\left(l^{-1}\sum\limits_{i=0}^{l-1} \mc T^i\right)\cl{0}{1}\bof(s)ds = \left(\la -l^{-1}\ti {\mc C}\right)^{-1}\Pi \mb P_\ms M\bof,
$$
where we used \begin{equation}\left(\sum\limits_{i=0}^{l-1} \mc T^i\right)\mc T = \sum\limits_{i=0}^{l-1} \mc T^i,\label{per}\end{equation}
by periodicity. This, combined with (\ref{sim1}), ends the proof. \end{proof}
Using (\ref{PiS}) and (\ref{uU}), we see that
\begin{equation}
\mb P_\ms N\mbb S^{-1}\bof = \cl{0}{1}[\mbb S^{-1}\bof](x)dx = \left(l_j^{-1} \sum\limits_{s=1}^{l_j}\cl{0}{1} \phi_{j,s}(y)dy\right)_{j\in \ms N} = \mbb S^{-1}\Pi\mb P_{\ms M}\bof.\label{bof'}
\end{equation}
\begin{theorem}
For any $\mb   f \in \mb X$ we have
\begin{equation}
\lim\limits_{\e\to 0^+} \mb P_{\ms N}e^{t \mb A_{0,\e}}\mb f = e^{t\mb V\mb B}\mb P_\ms N\mb f.
\label{main}
\end{equation}
\end{theorem}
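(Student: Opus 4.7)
The plan is to transfer the problem to $\mc X$ via the isomorphism $\mbb S$, where the explicit representation (\ref{TB}) for $e^{tv\mc A_{0,\e}}$ is available, and then to exploit the identity $\Pi\mc T=\Pi$ in order to kill the oscillation of $\lfloor vt/\e\rfloor\bmod l$ that obstructed strong convergence in Section \ref{sec2}. Using the similarity (\ref{sim}) combined with (\ref{bof'}), one has
\begin{equation*}
\mb P_\ms N e^{t\mb A_{0,\e}}\mb f = \mbb S^{-1}\Pi\mb P_\ms M\,e^{tv\mc A_{0,\e}}\mbb S\mb f,
\end{equation*}
so it suffices to prove, for $\bof=\mbb S\mb f$, that $\Pi\mb P_\ms M\,e^{tv\mc A_{0,\e}}\bof\to e^{(vt/l)\ti{\mc C}}\Pi\mb P_\ms M\bof$ in $l^1_\ms M$.

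Setting $n_\e=\lfloor vt/\e\rfloor$ and $\eta_\e=vt/\e-n_\e\in[0,1)$, formula (\ref{TB}) integrated over $x\in[0,1]$ yields
\begin{equation*}
\mb P_\ms M\,e^{tv\mc A_{0,\e}}\bof = (\mc T+\e\mc C)^{n_\e+1}\cl{1-\eta_\e}{1}\bof(y)\,dy + (\mc T+\e\mc C)^{n_\e}\cl{0}{1-\eta_\e}\bof(y)\,dy.
\end{equation*}
The difference in exponents is exactly what broke strong convergence in the counterexample of Section \ref{sec2}. However, $\Pi\mc T=\Pi$ gives $\Pi[(\mc T+\e\mc C)-\mc I]=\e\Pi\mc C$, so
\begin{equation*}
\Pi\bigl[(\mc T+\e\mc C)^{n_\e+1}-(\mc T+\e\mc C)^{n_\e}\bigr]=\e\Pi\mc C(\mc T+\e\mc C)^{n_\e}=O(\e),
\end{equation*}
using the equiboundedness of the powers $(\mc T+\e\mc C)^{n_\e}$ inherited from Lemma 2.1. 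Adding the two partial integrals therefore yields
\begin{equation*}
\Pi\mb P_\ms M\,e^{tv\mc A_{0,\e}}\bof=\Pi(\mc T+\e\mc C)^{n_\e}\mb P_\ms M\bof+O(\e).
\end{equation*}

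To compute the limit of $\Pi(\mc T+\e\mc C)^{n_\e}$, write $n_\e=lm_\e+r_\e$ with $0\leq r_\e<l$; then (\ref{kexp}) gives $(\mc T+\e\mc C)^{n_\e}=(\mc I+\e\ti{\mc C}+\e^2\mc D)^{m_\e}(\mc T+\e\mc C)^{r_\e}$. A short direct computation using the definition of $\ti{\mc C}$ and $\Pi\mc T^k=\Pi=\mc T^k\Pi$ yields $\Pi\ti{\mc C}=\ti{\mc C}\Pi=l\Pi\mc C\Pi$, so $\Pi$ commutes with $\ti{\mc C}$, while $\Pi(\mc T+\e\mc C)^{r_\e}=\Pi+O(\e)$ uniformly in $r_\e<l$. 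A standard Euler-type estimate, using $m_\e\e\to vt/l$ from (\ref{floorlim}) together with $m_\e\e^2\to 0$, then produces $(\mc I+\e\ti{\mc C}+\e^2\mc D)^{m_\e}\to e^{(vt/l)\ti{\mc C}}$ in operator norm on compact $t$-intervals. Combining the three pieces, $\Pi(\mc T+\e\mc C)^{n_\e}\to\Pi e^{(vt/l)\ti{\mc C}}=e^{(vt/l)\ti{\mc C}}\Pi$.

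It remains to match $\mbb S^{-1}e^{(vt/l)\ti{\mc C}}\Pi\mb P_\ms M\mbb S\mb f$ with the right-hand side $e^{t\mb V\mb B}\mb P_\ms N\mb f=e^{t\mb V\mb B}\mbb S^{-1}\Pi\mb P_\ms M\mbb S\mb f$. This reduces to the generator-level identity $(v/l)\ti{\mc C}\Pi=\mbb S\mb V\mb B\mbb S^{-1}\Pi$ on $\Pi(l^1_\ms M)$; since $\ti{\mc C}\Pi=l\Pi\mc C\Pi$, this is $v\Pi\mc C\Pi=\mbb S\mb V\mb B\mbb S^{-1}\Pi$, a direct block calculation using (\ref{PiS}), the explicit form of each $\mc C_{ij}$, and $\mb V=v\,\mathrm{diag}(1/l_j)_{j\in\ms N}$. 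The main obstacle is the cancellation highlighted in the second paragraph: the oscillating phase $\lfloor vt/\e\rfloor\bmod l$, which prevents convergence of $e^{tv\mc A_{0,\e}}$ itself, is precisely what the averaging projector $\Pi$ annihilates, realizing the asymptotic state lumping anticipated in the introduction.
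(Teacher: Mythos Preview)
Your argument is correct, and it reaches the same conclusion by a genuinely different route from the paper.

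Both proofs begin identically: transfer to $\mc X$ via $\mbb S$ and (\ref{bof'}), use the explicit formula (\ref{TB}) to write $\mb P_\ms M e^{tv\mc A_{0,\e}}\bof$ as a combination of $(\mc T+\e\mc C)^{n_\e}$ and $(\mc T+\e\mc C)^{n_\e+1}$ acting on partial integrals, and exploit $\Pi\mc T=\Pi$ to absorb the $+1$ discrepancy at cost $O(\e)$. From that point the two arguments diverge. The paper decomposes $\mb f=\mb P_\ms N\mb f+\mb w$ with $\mb P_\ms N\mb w=0$; for the constant part it simply invokes Corollary \ref{regcon}, while for $\mb w$ it proves the commutation identity $\Pi(\mc T+\e\mc C)^{n-1}=(\mc T+\e\mc C)^{n-1}\Pi+O(\e)$ directly via $\mc T^j=(\mc T+\e\mc C)^j-\e\mc R_j$, so that the leading term hits $\Pi\mb P_\ms M\mbb S\mb w=0$ and nothing remains. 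No identification of the limit operator in the $\mc X$-picture is needed. You instead compute the full limit: factor $(\mc T+\e\mc C)^{n_\e}=(\mc T+\e\mc C)^{r_\e}(\mc I+\e\ti{\mc C}+\e^2\mc D)^{m_\e}$ (these commute, being powers of the same operator, which is what makes your ``combining the three pieces'' step legitimate), use $\Pi(\mc T+\e\mc C)^{r_\e}=\Pi+O(\e)$, run the Euler-product estimate $(\mc I+\e\ti{\mc C}+\e^2\mc D)^{m_\e}\to e^{(vt/l)\ti{\mc C}}$ (valid since $\mc D=\mc D(\e)$ is uniformly bounded and $m_\e\e^2\to 0$), and finally match $(v/l)\ti{\mc C}|_{\mathrm{Ran}\,\Pi}=v\Pi\mc C|_{\mathrm{Ran}\,\Pi}$ with $\mb V\mb B$ by the block computation --- which is exactly the generator-level content of (\ref{reseq1}).

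The trade-off: the paper's proof is shorter because Corollary \ref{regcon} already encodes the limit semigroup, so the only new work is showing the mean-zero piece dies. Your proof is self-contained and more informative structurally --- it makes explicit that the limiting dynamics on $\Pi(l^1_\ms M)$ is $e^{(vt/l)\ti{\mc C}}$ and that this is similar, via $\mbb S$, to $e^{t\mb V\mb B}$ --- at the price of the Euler estimate and the block calculation, both routine but additional.
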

\begin{proof}
 Let us write $\mb f \in \mb X$ as
$$
\mb f = \mb P_{\ms N}\mb f + \mb f - \mb P_{\ms N}\mb f = \mb P_{\ms N}\mb f +\mb w,
$$
where $\mb P_{\ms N}\mb f\in l^1_{\ms N}$ and $\mb P_{\ms N}\mb w =0$. 

By Corollary \ref{regcon} and the continuity of $\mb P_\ms N,$
$$
\lim\limits_{\e\to 0^+}\mb P_\ms N [e^{t\mb A_{0,\e}}\mb P_\ms N \mb f] = \mb P_\ms N e^{t \mb V\mb B}\mb P_\ms N\mb f = e^{t \mb V\mb B}\mb P_\ms N\mb f.
$$
Hence, by linearity, it suffices to show that 
\begin{equation}
\lim\limits_{\e\to 0^+}\mb P_\ms N [e^{t\mb A_{0,\e}}\mb w] = 0,
\label{w0}
\end{equation}
provided $\mb P_\ms N\mb w =0$. By (\ref{bof'}), we have
\begin{equation}
\mb P_\ms N [e^{t\mb A_{0,\e}}\mb f] = \mb P_\ms N \mbb S^{-1} [e^{tv\mc A_{0,\e}}\mbb S\mb f] =
\mbb S^{-1}\Pi\mb P_\ms M  [e^{tv\mc A_{0,\e}}\mbb S\mb f], \quad \mb f \in \mb X.
\label{go}
\end{equation}
Further, for $n-1 \leq vt/\e\leq n$ and $\bof \in \mc X,$ we have
\begin{eqnarray}
&&\mb P_\ms M e^{tv \mc A_{0,\epsilon} }\bof\nn\\
&&\phantom{}=(\mc{T}+\epsilon \mc{C})^{n}\!\!\!\!\!\!\!\cl{0}{\frac{vt}{\epsilon}-n+1}\!\!\!\!\!\bof\left(n+x-\frac{vt}{\epsilon}\right)dx+
(\mc{T}+\epsilon \mc{C})^{n-1}\!\!\!\!\!\!\!\cl{\frac{vt}{\epsilon}-n+1}{1}\!\!\!\!\!\bof\left(n-1+x-\frac{vt}{\epsilon}\right)dx \nn\\
&&\phantom{}=\mc T(\mc{T}+\epsilon \mc{C})^{n-1}\!\!\cl{0}{1}\!\!\bof(z)dz+\e\mc C(\mc{T}+\epsilon \mc{C})^{n-1}\!\!\!\cl{n-\frac{vt}{\epsilon}}{1}\!\!\!\bof(z)dz.
\label{mcP}
\end{eqnarray}
Let us denote $(\mc{T}+\epsilon \mc{C})^{j} -\mc T^j =\e\mc R_j.$ Then, by (\ref{per}),
\begin{eqnarray}
\Pi \mc T(\mc{T}+\epsilon \mc{C})^{n-1} &=& \Pi(\mc{T}+\epsilon \mc{C})^{n-1}=\frac{1}{l} \sum\limits_{j=0}^{l-1}\mc T^j(\mc{T}+\epsilon \mc{C})^{n-1}  \label{hm}\\
&=& \frac{1}{l} \sum\limits_{j=0}^{l-1}\left((\mc{T}+\epsilon \mc{C})^{n-1+j} -\e\mc R_j(\mc{T}+\epsilon \mc{C})^{n-1}\right)\nn\\
&=&\frac{1}{l}\sum\limits_{j=0}^{l-1}\left((\mc{T}+\epsilon \mc{C})^{n-1}(\mc T^j +\e\mc R_j) -\e\mc R_j(\mc{T}+\epsilon \mc{C})^{n-1}\right) \nn\\
&=& (\mc{T}+\epsilon \mc{C})^{n-1}\Pi  + \e\left((\mc{T}+\epsilon \mc{C})^{n-1}\mc R -\mc R(\mc{T}+\epsilon \mc{C})^{n-1}\right),\nn
\end{eqnarray}
where $\mc R = \frac{1}{l}\sum\limits_{j=0}^{l-1}\mc R_j$. Since $\mbb S$ is an isomorphism, using (\ref{bof'}) we see that $\mb P_\ms N\mb w=0$ implies $\Pi\mb P_\ms M\mbb S\mb w =0$, hence $(\mc{T}+\epsilon \mc{C})^{n-1}\Pi \mb P_\ms M \mbb S\mb w=0$. Thus (\ref{mcP}) and (\ref{hm}) yield
\begin{eqnarray*}
\lim\limits_{\e\to 0}\Pi\mb P_\ms M e^{tv \mc A_{0,\epsilon} }\mbb S\mb w &=&\lim\limits_{\e\to 0}\e\Pi \mc C(\mc{T}+\epsilon \mc{C})^{n-1}\!\!\!\cl{n-\frac{vt}{\epsilon}}{1}\!\!\!\mbb S\mb w(z)dz\nn\\
&&\phantom{xx} + \lim\limits_{\e\to 0}\e\left((\mc{T}+\epsilon \mc{C})^{n-1}\mc R -\mc R(\mc{T}+\epsilon \mc{C})^{n-1}\right)\mb P_\ms M\mbb S\mb w =0,
\label{mcP1}
\end{eqnarray*}
which proves (\ref{w0}).
\end{proof}

\medskip
Received ; Accepted .

\medskip


\begin{thebibliography}{[KLR73]}
  \bibitem{Am} \newblock H. Amman, J. Escher, \newblock "Analysis II", \newblock Birkh\"{a}user, Basel (2008).
  \bibitem{Ar} \newblock W. Arendt, \newblock \textit{Resolvent positive operators}, \newblock {Proc. Lond.
Math. Soc.}  (3) 54 (1987), 321--349.
\bibitem{BaAr} \newblock J. Banasiak, L. Arlotti, \newblock {"Positive perturbations of semigroups with applications"}, \newblock Springer Verlag, London, 2006.
\bibitem{BaLabook} \newblock J. Banasiak, M. Lachowicz, \newblock{"Methods of Small Parameter in Mathematical Biology"}, \newblock Birkh\"auser/Springer, Cham, 2014.
\bibitem{BM} \newblock J. Banasiak, M. Moszy\'{n}ski, \newblock\textit{Dynamics of birth-and-death processes with proliferation – stability and chaos,} \newblock Discrete Contin. Dyn. Syst. \textbf{29}(1), (2011)
67--79.
\bibitem{BF1} \newblock J. Banasiak, A. Falkiewicz, \newblock \textit{Some transport and diffusion processes on networks and their graph realizability,} \newblock {Appl. Math. Lett.}, \textbf{45}, (2015), 25--30
\bibitem{BFN2} \newblock J. Banasiak, A. Falkiewicz, P. Namayanja, \newblock\textit{Semigroup approach to diffusion and transport problems on networks}, \newblock {Semigroup Forum}, DOI 10.1007/s00233-015-9730-4.
    \bibitem{BFN3} \newblock J. Banasiak, A. Falkiewicz, P. Namayanja, \newblock\textit{ Asymptotic state lumping in transport and diffusion problems on networks  with applications to population problems}, \newblock Math. Models Methods Appl. Sci., \textbf{26}(2),  (2016),  215--247, DOI: 10.1142/S0218202516400017.
  \bibitem{Bobks1} \newblock A. Bobrowski,  \newblock  "Convergence of One-parameter Operator Semigroups. In Models of Mathematical Biology and Elsewhere", \newblock Cambridge University Press, Cambridge, 2016.
\bibitem{Bobks} \newblock A. Bobrowski,  \newblock {"Functional Analysis for Probability and Stochastic Processes"}, \newblock Cambridge University Press, Cambridge, 2005.
\bibitem{BoK} \newblock A. Bobrowski, M. Kimmel, \newblock\textit{Asymptotic behaviour of an operator exponential related to branching random walk models of DNA repeats}, \newblock J. Biol. Systems, \textbf{7}(1), (1999), 33--43.
    \bibitem{BD08}\newblock
 B. Dorn, \newblock\textit{Semigroups for flows in infinite networks}, \newblock {Semigroup Forum}, {\bf 76},
(2008), 341--356.
\bibitem{DS} \newblock N. Dunford, J. T. Schwartz, \newblock "Linear Operators. Part I: General Theory", \newblock Wiley-Interscience, New York, 1988.
\bibitem{EN} \newblock K.-J. Engel, R. Nagel, \newblock{"One-Parameter Semigroups
for Linear Evolution Equations"}, Springer Verlag, New York, 1999.
\bibitem{KS} \newblock M. Kimmel, D.N. Stivers, \newblock \textit{ Time-continuous branching
walk models of unstable gene amplification}, \newblock { Bull. Math.
Biol.} {\bf 50}, (1994), 337--357.
\bibitem{AD+P1}\newblock
M. Kimmel, A. {\'S}wierniak and A. Pola{\'n}ski,\newblock\textit{
Infinite--dimensional model of evolution of drug resistance of
cancer cells}, \newblock{ J. Math. Systems Estimation Control} {\bf
8}(1), 1998, 1--16.
\bibitem{KS04} \newblock M. Kramar, E. Sikolya, \newblock\textit{Spectral Properties and Asymptotic Periodicity of Flows in Networks},
\newblock {Math. Z.}, {\bf 249}, (2005), 139--162.
\bibitem{leb} \newblock J. L. Lebowitz, S. I. Rubinov, \newblock\textit{A Theory for the Age and Generation Time Distribution of a Microbial Population}, {J. Theor. Biol.}, \textbf{1}, (1974), 17--36.
    \bibitem{CD00} \newblock  C. D. Meyer, \newblock{"Matrix Analysis and Applied Linear Algebra"},  \newblock SIAM, Philadelphia, 2000.
    \bibitem{Nathe}
 P. Namayanja,  {"Transport on Network Structures"}, Ph.D thesis, UKZN, 2012.
\bibitem{rot}
\newblock M. Rotenberg, \newblock\textit{Transport theory for growing cell population,} {J. Theor. Biol.}, \textbf{103}, (1983), 181--199.
\bibitem{Ru} W. Rudin, \newblock "Functional analysis", \newblock McGraw-Hill Book Co., New York, 1973.
 \bibitem{SPK} \newblock A. \'{S}wierniak, A. Pola\'{n}ski,  M. Kimmel, \newblock\textit{Control problems arising in chemotherapy
under evolving drug resisitance}, \newblock Preprints of the 13th World Congress
of IFAC 1996, Volume B, 411--416.
\bibitem{TWC} \newblock H. T. K. Tse, W. McConnell Weaver,  D. Di Carlo, \newblock\textit{Increased Asymmetric and Multi-Daughter Cell Division in Mechanically Confined Microenvironments}, PLoS ONE, (2012), 7(6): e38986,  doi:10.1371/journal.pone.0038986.












\end{thebibliography}
\end{document}